\def\a{\alpha}               
\def\D{{\mathbb D}}  
\def\C{{\mathbb C}}
\def\({\left(}       \def\){\right)}
\newtheorem{prop}{\sc Proposition}
\newtheorem{thm}{\sc Theorem}
\begin{document}
\title[On the Schwarzian derivative]
{Harmonic Schwarzian derivative and methods of approximation of zeros}
\dedicatory{Dedicated to Professors Antonio Martin\'on and Jos\'e M. M\'endez on the occasion of their retirement}
\author[M. J. Mart\'{\i}n]{Mar\'{\i}a J. Mart\'{\i}n}
\address{Departamento de An\'alisis Matem\'atico, Universidad de La Laguna.  Av. Astrof\'{\i}sico Francisco S\'anchez, s/n. 38271, La Laguna (Sta. Cruz de Tenerife), Spain.}\email{maria.martin@ull.es}


\begin{abstract} We review the relation between the classical formulas of the pre-Schwarzian and Schwarzian derivatives of locally univalent analytic functions and the derivatives of the generating functions of the methods due to Newton and Halley, respectively, for approximating zeros. We extend these relations to the cases when the functions considered are harmonic.
\end{abstract}

\maketitle
\date{\today}
\section*{Introduction}

Let $\varphi$ be a holomorphic function in the simply connected domain $\Omega\subset \mathbb{C}$. It is well known that such function is \emph{locally univalent} (\emph{i.e.} locally injective)  if and only if $\varphi'$ is different from zero in $\Omega$. Equivalently, if the Jacobian of $\varphi$, given by $J_\varphi=|\varphi'|^2$, does not vanish in $\Omega$. 

\par\smallskip
For such a  holomorphic function  $\varphi$, the \emph{pre-Schwarzian derivative}, $P(\varphi)$, of $\varphi$ equals
\begin{equation}\label{def-preS}
P(\varphi)=\frac{\varphi''}{\varphi'}\,.
\end{equation}
Closely related is the so-called \emph{Schwarzian derivative}, $S(\varphi)$, of $\varphi$, given by
\begin{equation}\label{def-S}
S(\varphi)=(P(\varphi))' -\frac 12 (P(\varphi))^2=\left(\frac{\varphi''}{\varphi'}\right)'-\frac 12 \left(\frac{\varphi''}{\varphi'}\right)^2=\frac{\varphi'''}{\varphi'}-\frac 32 \left(\frac{\varphi''}{\varphi'}\right)^2\,.
\end{equation}

\par\medskip
One of the key properties of this non-linear operator is its invariance under post-composition with M\"obius transformations. That is, $S(M\circ\varphi)=S(\varphi)$ for all \emph{M\"obius} (or linear fractional) transformations
\[
M(z)=\frac{az+b}{cz+d}\,,\quad ad-bc\neq 0\,.
\]
\par\smallskip
Based on this characteristic, and according to \cite{D-harm}, here is the derivation of \eqref{def-S} due to Hermann Amandus Schwarz in $1873$.
\par\smallskip
Let $\varphi$ and $\psi$ be two locally univalent holomorphic functions such that $\psi=M\circ \varphi$ for some M\"obius transformation $M$ as above, so that $(c\varphi+d)\psi=a\varphi+b$. Three successive differentiations produce the system of linear equations
\[
\begin{cases}
c(\varphi\psi)'+d\psi'-a\varphi' & =  0\,,\\
c(\varphi\psi)''+d\psi''-a\varphi'' & =  0\,,\\
c(\varphi\psi)'''+d\psi'''-a\varphi''' & =  0\,,\\
\end{cases}
\]
with nontrivial solution $(c, d, -a)$. The existence of a nontrivial solution guarantees that the determinant of the coefficients of the linear system  vanishes identically. When the determinant is expanded, the equation reads as
\[
3\psi'^2\varphi''^2+2\psi'\psi'''\varphi'^2=3\varphi'^2\psi''^2+2\varphi'\varphi'''\psi'^2\,.
\]
Dividing both sides  by $2\varphi'^2\psi'^2$, we obtain 
\[
\frac{\varphi'''}{\varphi'}-\frac 32 \left(\frac{\varphi''}{\varphi'}\right)^2=\frac{\psi'''}{\psi'}-\frac 32 \left(\frac{\psi''}{\psi'}\right)^2\,,
\]
which says that both functions $\varphi$ and $\psi$ must satisfy the equation $S(\varphi)=S(\psi)$, where $S(\varphi)$ and $S(\psi)$ are, respectively, the Schwarzian derivatives of $\varphi$ and $\psi$ given by \eqref{def-S}.

\par\medskip
The Schwarzian derivative of a locally univalent function $\varphi$ has been used historically to measure ``how close'' is $\varphi$ to being a M\"obius transformation or (in an invariant form) how  close it is to being univalent. We refer the reader to the beautiful paper \cite{Osgood} as a detailed source of historical information about the Schwarzian derivative and its different properties and applications. Another interesting paper is \cite{OT}, where the authors mention that the formula for the Schwarzian derivative was discovered by Lagrange in his treatise ``\emph{Sur la construction des cartes g\`eographiques}'' from $1781$. It also appears in the paper \cite{Kummer}, published in 1836. The denomination of the operator defined by \eqref{def-S} as ``Schwarzian derivative'' (or simply, ``Schwarzian'') after Schwarz is attributed to Cayley. 
\par\smallskip
But, in fact, it turns out that the formula \eqref{def-S} is related with the error term in the iterative method due to Halley (presented in his 1694 paper \cite{Halley}) for approximating zeros of a given function.  Before explaining this relation, the reader should be advised of the fact that, in this work, we are not analyzing the conditions of convergence of the methods we present but just the relations between the formulas \eqref{def-preS} and \eqref{def-S} and the derivatives of the so-called generating functions of the methods.

\subsection*{The Newton and Halley Methods for approximating the zeros of a function} It is well know that the Newton method (properly known as the Newton-Raphson method) is a technique to find an approximation for the root of a  function $f$. Roughly speaking, the technique starts with a guess $z_0$, say, and uses an iterative procedure to find a sequence of improved guesses. In particular, if $z_n$ is “close” to a root of $\varphi$, then
\[
z_{n+1}=z_n-\frac{\varphi(z_n)}{\varphi'(z_n)}\,,
\]
will generally be (much) closer. 

The \emph{generating function} $F^N$ of the method is then given by
\[
F^N(z)=z-\frac{\varphi(z)}{\varphi'(z)}\,.
\]
A straightforward calculation shows that if $\alpha$ is one zero of the locally univalent analytic function $\varphi$, then $F^N(\alpha)=\alpha$, $(F^N)'(\alpha)=0$ and $(F^N)''(\alpha)=P(\varphi)(\alpha)$, where $P(\varphi)$ is given by \eqref{def-preS}.

\par\medskip
Notice that the formula for the function $F^N$  can be alternatively obtained by applying the following procedure.  

Let $\varphi$ have a zero at the point $\alpha$. Then, by using the first two terms in the Taylor  series expansion of $\varphi$ centered at a point $z$ (close to $\alpha$), we obtain
\[
0=\varphi(\alpha)\approx  \varphi(z)+\varphi'(z)(\alpha-z)\,,
\]
which gives
\begin{equation}\label{eq-approxN}
\alpha -z \approx  -\frac{\varphi(z)}{\varphi'(z)}\,, 
\end{equation}
or, in other words, we have
\[
\alpha\approx F^N(z)\,.
\]

The recursive formula in Halley's method to approximate simple zeros of  functions $\varphi$ is

\[
z_{n+1}=z_n-\frac{2\varphi(z_n)\varphi'(z_n)}{2[\varphi'(z_n)]^2-\varphi(z_n)\varphi''(z_n)}\,,
\]
beginning with an initial guess $z_0$. In this case, the corresponding generating function of the method equals
\[
F^H(z)=z-\frac{2\varphi(z)\varphi'(z)}{2[\varphi'(z)]^2-\varphi(z)\varphi''(z)}\,.
\]

This is precisely the function obtained by arguing in a similar way as before by involving the Taylor series expansion of $\varphi$ (which is, in fact, a similar approach to the one presented in \cite{Palmore}; see also \cite{Brown}). Namely, let $\varphi$ have a zero at the point $\alpha$. By using the first three terms in the Taylor series expansion of $\varphi$ centered at a point $z$ close to $\alpha$, we have (using also the approximation \eqref{eq-approxN} from the Newton method)
\begin{eqnarray*}
0&=&\varphi(\alpha) \approx  \varphi(z)+\varphi'(z)(\alpha-z)+\frac{\varphi''(z)}{2}(\alpha-z)^2\\
&=&  \varphi(z)+(\alpha-z)\left[\varphi'(z)+\frac{\varphi''(z)}{2}(\alpha-z)\right]\\
&\approx&  \varphi(z)+(\alpha-z)\left[\varphi'(z)-\frac{\varphi''(z)\varphi(z)}{2\varphi'(z)}\right]\,,
\end{eqnarray*}
which gives
\begin{eqnarray*}
\alpha &\approx& z-\frac{\varphi(z)}{\varphi'(z)-\frac{\varphi(z)\varphi''(z)}{2\varphi'(z)}}\\
&=& z-\frac{2\varphi(z)\varphi'(z)}{2[\varphi'(z)]^2-\varphi(z)\varphi''(z)}=F^H(z)\,.
\end{eqnarray*}

It is clear that $F^H(\alpha)=\alpha$. The first two derivatives of this function at  $\alpha$ equal
\[
(F^H)'(\alpha)=(F^H)''(\alpha)=0\,.
\]

The Schwarzian derivative of $\varphi$ comes up when calculating the third derivative of $F^H$. Concretely, it is not difficult to check that
\[
 (F^H)'''(\alpha)=-S(\varphi)(\alpha)\,,
\]
where, as announced, $S(\varphi)$ is the Schwarzian derivative of $\varphi$ given by \eqref{def-S}.

\par\medskip

The author (jointly with R. Hern\'andez)  introduced  in \cite{HM-Schwarzian} the definition of the \emph{harmonic pre-Schwarzian} and \emph{Schwarzian derivatives} of locally univalent harmonic mappings. Different approaches where analyzed to get the definition of these operators, as is explained in \cite{HM-Schwarzian}.  On the other hand, the authors in \cite{SZ} have recently presented a ``harmonic'' Newton  method for approximation of zeros for harmonic mappings. In this paper, we show that the generating function of method presented in \cite{SZ} can be obtained by involving the series expansion of the harmonic mapping in the same way as above (for analytic functions). Based on this idea, we present a harmonic analogue of the generating function for what could be a ``harmonic Halley method''. We prove that, in fact, the harmonic pre-Schwarzian and Schwarzian derivatives of the harmonic function $f$ are related to the derivatives of the generating functions of the harmonic methods at the zero of $f$.   

In other words, we generalize the relation between the classical pre-Schwarzian and Schwarzian derivatives of analytic functions and the generating functions of the methods for approximating zeros due to Newton and Halley to those cases when the functions considered are merely harmonic. 

\section{The harmonic pre-Schwarzian and Schwarzian derivatives}

A complex-valued function $f=u+iv$ is \emph{harmonic} in a domain $\Omega\subset \C$ if both the real and imaginary parts of $f$ are (real) harmonic in $\Omega$. That is, if and only if $\Delta u = \Delta v=0$, where $\Delta$ is the \emph{Laplacian}, defined by
\[
\Delta=\frac{\partial^2}{\partial x^2}+ \frac{\partial^2}{\partial y^2}\,.
\]

In other words, harmonic mappings are complex-valued harmonic functions whose real and imaginary parts are not necessarily conjugate. Therefore, the Cauchy-Riemann equations need not be satisfied, so the functions need not be analytic. 
\par\medskip
As is mentioned in \cite{D-harm}, although harmonic mappings are natural generalizations of analytic functions, they were studied originally by differential geometers because of their natural role in parametrizing minimal surfaces. The seminal paper \cite{C-SS}, where the authors point out that many of the classical results for conformal mappings have clear analogues for univalent harmonic mappings, made the complex analysts become attracted by this type of functions.  

\par\smallskip
The \emph{Wirtinger operators}, commonly used in complex analysis, are very convenient. They are defined by
\begin{equation}\label{eq-Wirtinger}
\frac{\partial}{\partial  z}=\frac{1}{2}\left(\frac{\partial}{\partial x}-i \frac{\partial}{\partial  y}\right) \quad \text{and}\quad \frac{\partial}{\partial \overline z}=\frac{1}{2}\left(\frac{\partial}{\partial  x}+i \frac{\partial}{\partial  y}\right)\,,
\end{equation}
where $z=x+iy$. A direct calculation shows that, in terms of these complex derivatives, the Laplacian of $f$ is 
\[
\Delta f =4 \frac{\partial^2 f}{\partial z\partial \overline z}\,.
\]

This latter expression for the Laplacian allows to prove that every harmonic mapping in a simply connected domain $\Omega$ has a \emph{canonical decomposition} $f=h+\overline g$, where $h$ and $g$ are analytic functions in $\Omega$. This decomposition is unique up to an additive constant (see \cite[p. 7]{D-harm}).

\par\smallskip
The fact that a harmonic function $f=h+\overline g$ is locally univalent if and only if its Jacobian $J_f=|h'|^2-|g'|^2$ does not vanish in $\Omega$  is due to Lewy \cite{Lewy}. A locally univalent harmonic mapping is said to be \emph{orientation-preserving} if $|h'|>|g'|$ in $\Omega$ or, equivalently, if $h$ is a locally univalent analytic function and the (second complex) \emph{dilatation} $\omega=g'/h'$ is analytic and satisfies the condition $\omega(\Omega)\subset\D$, where $\D$ is the unit disk in the complex plane. Otherwise, the locally univalent function is \emph{orientation-reversing}. It is obvious that $f$ is orientation-reversing if and only if $\overline f$ is orientation-preserving.

\par\smallskip
The \emph{harmonic Schwarzian derivative} $S_H$ of a locally univalent harmonic function $f$ with Jacobian $J$ was defined in \cite{HM-Schwarzian} as
\begin{equation}\label{eq-Schwarzian0}
S_H(f)=\frac{\partial}{\partial z}\left(P_H(f)\right)-\frac 12 \left(P_H(f)\right)^2\,,
\end{equation}
where $P_H(f)$ is the \emph{harmonic pre-Schwarzian derivative} of $f$, which equals
\[
P_H(f)=\frac{\partial}{\partial z} \log J\,.
\]
It is easy to check that $P_H(f)=P_H(\overline f)$ and $S_H(f)=S_H(\overline f)$ for any locally univalent harmonic mapping $f$ in a simply connected domain $\Omega$. Therefore, without loss of generality we may assume that $f$ is orientation-preserving.  

It is convenient to use the operators in \eqref{eq-Wirtinger} and notice that the following identities hold for a given analytic function $h$ (or $g$)
\begin{equation}\label{eq-derivadas}
\frac{\partial h}{\partial z} = h'\quad \text{and}\quad \frac{\partial \overline h}{\partial z}=0
\end{equation}
to get that the harmonic pre-Schwarzian and Schwarzian derivatives of the orientation-preserving harmonic mapping $f=h+\overline g$ with dilatation $\omega=g^\prime/h^\prime$ can be written, respectively,  as
\begin{equation}\label{eq-preSchwarzian}
P_H(f)=P(h)-\frac{\overline \omega \omega'}{1-|\omega|^2} = \frac{h''}{h'}-\frac{\overline \omega \omega'}{1-|\omega|^2}\,,
\end{equation}
and
\begin{equation}\label{eq-Schwarzian}
S_H(f)=S(h)+\frac{\overline \omega}{1-|\omega|^2}\left(\frac{h''}{h'}\,\omega'-\omega''\right)
-\frac 32\left(\frac{\omega'\,\overline
\omega}{1-|\omega|^2}\right)^2\,,
\end{equation}
where $S(h)$ is the classical Schwarzian derivative of the function $h$ defined by \eqref{def-S}.
\par\smallskip
It is clear that if $f$ is analytic (so that its dilatation is identically zero) then $S_H(f)$ coincides with the classical Schwarzian derivative of $f$. In other words, the operator defined by \eqref{eq-Schwarzian0} -or by \eqref{eq-Schwarzian}- can be considered as a generalization of the classical Schwarzian derivative~\eqref{def-S}.  It is also obvious that the harmonic pre-Schwarzian derivative generalizes \eqref{def-preS}. We refer the reader to \cite{HM-Schwarzian} for the motivation of the definition  as well as for different properties that the harmonic Schwarzian (and the harmonic pre-Schwarzian) derivatives satisfy. 

\par\smallskip
These two harmonic operators $P_H$ and $S_H$ have proved to be useful for generalizing classical results related to holomorphic functions to the more general setting of harmonic mappings (see, for instance, \cite{HM-Chuaqui,  HM-qc, HM-Schwarzian, HM-Nehari,   HM-Mobius, Huusko-M}).

\section{The ``harmonic Newton iteration''}

By exploiting the fact that for a given  continuously Fr\'echet differentiable map $F:D\subset X\to Y$ between two Banach spaces $X$ and $Y$ (where $D$ is an open set in $X$) the \emph{Newton iteration} with initial point $z_0\in D$ is given by 
\[
z_{n+1}=z_n-[F'(z_n)]^{-1}F(z_n)\,,\quad n\geq 0\,,
\]
the authors in \cite{SZ} define the \emph{harmonic Newton iteration} for a harmonic mapping $f=h+\overline g$ as
\begin{eqnarray*}
z_{n+1}&=& z_n-[f'(z_n)]^{-1}f(z_n)\\
&=& z_n-\frac{\overline{h'(z_n)}f(z_n)-\overline{g'(z_n)f(z_n)}}{J_f(z_n)}=z_n-\frac{\overline{h'(z_n)}f(z_n)-\overline{g'(z_n)f(z_n)}}{|h'(z_n)|^2-|g'(z_n)|^2}\,,
\end{eqnarray*}
beginning with an initial guess $z_0$. Therefore, the generating function of the method (or the \emph{harmonic Newton map}, as named in \cite{SZ}) equals
\begin{equation}\label{eq-Newtongenerating}
F^{N_H}(z)=z-\frac{\overline{h'(z)}f(z)-\overline{g'(z)f(z)}}{|h'(z)|^2-|g'(z)|^2}\,.
\end{equation}

Let us again argue as in the introduction to show an alternative way of getting $F^{N_H}$ by involving the power series expansion of $f$. For a given zero $\alpha$ of $f=h+\overline g$, let $z$ be close enough to $\alpha$ so that, by using only the first terms in the power series of $f$, we can write
\begin{equation}\label{eq-N11}
0=f(\alpha)\approx f(z)+h'(z)(\alpha-z)+\overline{g'(z)}(\overline{\alpha-z})\,.
\end{equation}
By taking complex conjugates, we obtain
\begin{equation}\label{eq-N12}
0\approx \overline{f(z)}+\overline{h'(z)}(\overline{\alpha-z})+g'(z)(\alpha-z)\,.
\end{equation}

After multiplying \eqref{eq-N11} by $\overline{h'(z)}$, \eqref{eq-N12} by $\overline{g'(z)}$, and subtracting the resulting equations, we have
\[
0\approx \overline{h'(z)} f(z)-\overline{g'(z)f(z)}+\left(|h'(z)|^2-|g'(z)|^2\right) (\alpha-z)\,.
\]
That is,
\begin{equation}\label{eq-aN}
\alpha-z \approx -\frac{\overline{h'(z)} f(z)-\overline{g'(z)f(z)}}{|h'(z)|^2-|g'(z)|^2}
\end{equation}
or, in other words, 
\[
\alpha \approx z -\frac{\overline{h'(z)} f(z)-\overline{g'(z)f(z)}}{|h'(z)|^2-|g'(z)|^2}=F^{N_H}(z)\,,
\]
where $F^{N_H}$ is the function in \eqref{eq-Newtongenerating}.

The following proposition shows the relation between the derivatives of the generating function of the harmonic Newton method and the harmonic pre-Schwarzian derivative $P_H$ defined in \eqref{eq-preSchwarzian} at the zero of the function $f$. The result generalizes this relation for analytic functions (mentioned in the introduction) to harmonic mappings. 

\begin{prop}
Let $f=h+\overline g$ be a (locally univalent) harmonic function in a disk centered at $\alpha$ with $f(\alpha)=0$ and let $F^{N_H}$ be the generating function of the harmonic Newton method given by \eqref{eq-Newtongenerating}. Then,
\[
F^{N_H}(\alpha)=\alpha\,,\quad \frac{\partial F^{N_H}}{\partial z}(\a)=F^{N_H}_z(\alpha)=0\,,\quad\text{and}\quad \frac{\partial^2 F^{N_H}}{\partial z^2}(\a)=F^{N_H}_{zz}(\alpha)=P_H(f)(\alpha)\,,
\]
where $P_H$ is the harmonic pre-Schwarzian derivative defined in \eqref{eq-preSchwarzian}
\end{prop}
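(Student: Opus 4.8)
My plan is to avoid the power series entirely and work with the closed form $F^{N_H}(z) = z - N(z)/D(z)$ from \eqref{eq-Newtongenerating}, where $N(z) = \overline{h'(z)}\,f(z) - \overline{g'(z)f(z)}$ and $D(z) = |h'(z)|^2 - |g'(z)|^2 = J_f(z)$, differentiating it twice with the Wirtinger operator $\partial/\partial z$. First I would record the standing facts: local univalence gives $D(\alpha) = J_f(\alpha) \neq 0$, so $D$ is nonvanishing on a neighbourhood of $\alpha$ and $F^{N_H}$ is a smooth map there; and $f(\alpha) = 0$ forces $N(\alpha) = 0$, which immediately yields $F^{N_H}(\alpha) = \alpha$ and, more usefully, guarantees that every term still carrying a factor of $N$ dies when a derivative of $N/D$ is evaluated at $\alpha$.

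The crux of the argument is a single identity: $\partial_z N \equiv D$, valid everywhere, not just at $\alpha$. To get it I would use \eqref{eq-derivadas} together with the fact that $\overline{h'}$ and $\overline{g'}$ are anti-holomorphic in $z$ (so $\partial_z\overline{h'} = \partial_z\overline{g'} = 0$) and that $\partial_z f = h'$, $\partial_z\overline{f} = g'$; this gives $\partial_z(\overline{h'}f) = |h'|^2$ and $\partial_z(\overline{g'f}) = \partial_z(\overline{g'}\,\overline{f}) = |g'|^2$, whence $\partial_z N = |h'|^2 - |g'|^2 = D$.

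Granting this, the rest is the quotient rule. From $F^{N_H} = z - N/D$ and $N_z = D$ one gets $F^{N_H}_z = 1 - (N_z D - N D_z)/D^2 = N D_z/D^2$, where the constant $1$ cancels precisely because $N_z = D$; evaluating at $\alpha$ and using $N(\alpha) = 0$ gives $F^{N_H}_z(\alpha) = 0$. Differentiating $N D_z/D^2$ once more and again discarding all terms proportional to $N$ at $z = \alpha$ leaves $F^{N_H}_{zz}(\alpha) = N_z(\alpha)D_z(\alpha)/D(\alpha)^2$, and one last use of $N_z(\alpha) = D(\alpha)$ reduces this to $D_z(\alpha)/D(\alpha)$. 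Since $D_z/D = \partial_z\log J_f$ is by definition the harmonic pre-Schwarzian derivative $P_H(f)$ appearing in \eqref{eq-preSchwarzian}, this equals exactly $P_H(f)(\alpha)$, as claimed.

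I do not expect a serious obstacle here; the only place that needs genuine care is the bookkeeping of $\partial_z$ acting on the conjugated factors in $N$ and $D$, and in particular checking that the identity $\partial_z N = J_f$ is applied correctly so that the $1$ in $F^{N_H}_z$ really does cancel. As a consistency check I would verify that setting $g \equiv 0$ recovers the classical Newton map $z - h/h'$ and the relation $(F^N)''(\alpha) = h''(\alpha)/h'(\alpha) = P(h)(\alpha)$ recalled in the introduction.
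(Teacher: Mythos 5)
Your proposal is correct and is essentially the paper's own argument: differentiate the closed form $z-N/J$ with the Wirtinger operator, use $N_z=J$ so that the constant $1$ cancels, and kill all remaining terms at $\alpha$ via $N(\alpha)=0$, arriving at $F^{N_H}_{zz}(\alpha)=J_z(\alpha)/J(\alpha)$. The only cosmetic difference is the last step: you invoke the definition $P_H(f)=\partial_z\log J$ directly, whereas the paper re-expresses $J_z(\alpha)/J(\alpha)$ through $g'=\omega h'$ to land on the explicit formula \eqref{eq-preSchwarzian}; since the paper has already established that these coincide, this is not a gap.
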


\begin{proof}
The fact that $f$ is locally univalent (so that the Jacobian $J=|h'|^2-|g'|^2$ of $f$ is different from $0$) guarantees that $F^{N_H}$ is well defined in a disk centered at $\alpha$ and with positive radius. In fact, $F^{N_H}$ is a function of class $\mathcal C^\infty$ in the variables $z$ and $\overline z$. 

It is obvious that $F^{N_H}(\alpha)=\alpha$ if $f(\alpha)=0$. To compute the derivatives of $F^{N_H}$ with respect to $z$, it is convenient to use the operators in \eqref{eq-Wirtinger} and \eqref{eq-derivadas}. We get

\begin{eqnarray*}
F^{N_H}_z(z)&=&1-\frac{\left(\overline{h'(z)}h'(z)-\overline{g'(z)}g'(z)\right)J(z)-\left(\overline{h'(z)} f(z)-\overline{g'(z)f(z)}\right)J_z(z)}{J^2(z)}\\
&=&  \frac{\left(\overline{h'(z)} f(z)-\overline{g'(z)f(z)}\right)J_z(z)}{J^2(z)}\,.
\end{eqnarray*}
Therefore, we have $F^{N_H}_z(\alpha)=0$, since $f(\alpha)=0$.
\par\smallskip
A straightforward calculation gives
\begin{eqnarray*}
F^{N_H}_{zz}(\alpha)&=&\frac{J_z(\alpha)}{J(\alpha)}=\frac{\overline{h'(\alpha)}h''(\alpha)-\overline{g'(\alpha)}g''(\alpha)}{\overline{h'(\alpha)}h'(\alpha)-\overline{g'(\alpha)}g'(\alpha)} \,.
\end{eqnarray*}

To finish the proof, it suffices to recall that $g'=\omega h'$, where $\omega$ is the dilatation of the function $f=h+\overline g$. This identity implies that  $g''=\omega'h'+\omega h''$ and, therefore, we can write 
\begin{eqnarray*}
F^{N_H}_{zz}(\alpha)&=&\frac{\overline{h'(\alpha)}h''(\alpha)-\overline{\omega(\alpha)h'(\alpha)}\left(\omega'(\a)h'(\a)+\omega(\a) h''(\a)\right)}{\overline{h'(\alpha)}h'(\alpha)-\overline{\omega(\alpha)h'(\alpha)}\omega(\alpha)h'(\alpha)}\\
&=& \frac{h''(\alpha)\left(1-|\omega(\alpha)|^2\right)-\overline{\omega(\alpha)}\omega'(\a)h'(\a)}{h'(\alpha)\left(1-|\omega(\alpha)|^2\right)}\\
&=&\frac{h''(\alpha)}{h'(\alpha)}-\frac{\overline{\omega(\alpha)}\omega'(\alpha)}{1-|\omega(\alpha)|^2}=P_H(f)(\alpha)\,.
\end{eqnarray*}
\end{proof}

\section{The ``harmonic Halley iteration''}

We have not been able to find an analogue of the classical Halley method for approximation of zeros or harmonic functions in the literature. However, motivated by the fact that the use of the truncated power series expansion of the function gives rise to the so-called harmonic Newton map introduced in the paper \cite{SZ} (as explained in the previous section) and that the same approach also works in the classical Newton and Halley methods (as explained in the introduction), we propose the following formula for what could be a generating function for a ``harmonic Halley method''. 

\par\smallskip
Let $f=h+\overline g$ be a locally univalent harmonic mapping defined in a simply connected domain $\Omega$ and let $\alpha\in\Omega$ be a zero of $f$. Since $f$ is a harmonic mapping, so that $f_{z\overline z}=0$, for $z$ sufficiently close to $\alpha$, we can write
\begin{eqnarray*}
0&=&f(\alpha)\approx f(z) + h'(z)(\alpha-z) +\overline{g'(z)}(\overline{\a-z})+\frac{h''(z)}{2}(\a-z)^2+ \frac{\overline{g''(z)}}{2}(\overline{\a-z})^2\\
&=& f(z)+(\alpha-z)\left(h'(z)+\frac{h''(z)}{2}(\a-z)\right)+ (\overline{\a-z})\left(\overline{g'(z)}+\frac{\overline{g''(z)}}{2}(\overline{\a-z})  \right)\,.
\end{eqnarray*}

Let us now use \eqref{eq-aN} to get

\begin{eqnarray}\label{eq-H1}
0 &\approx&  f(z)+(\alpha-z)\left(h'(z)-\frac{h''(z)}{2}\frac{N(z)}{J(z)}\right)+ (\overline{\a-z})\left(\overline{g'(z)}-\frac{\overline{g''(z)}}{2}\frac{\overline{N(z)}}{J(z)} \right)\,,
\end{eqnarray}
where $N(z)=\overline{h'(z)} f(z)-\overline{g'(z)f(z)}$ and $J(z)=|h'(z)|^2-|g'(z)|^2$ is the Jacobian of $f$ at $z$.  Taking complex conjugates in the previous equation, we have 

\begin{eqnarray}\label{eq-H2}
0 &\approx&  \overline{f(z)}+(\overline{\alpha-z})\left(\overline{h'(z)}-\frac{\overline{h''(z)}}{2}\frac{\overline{N(z)}}{J(z)}\right)+ (\a-z)\left(g'(z)-\frac{g''(z)}{2}\frac{N(z)}{J(z)} \right)\,.
\end{eqnarray}

By multiplying \eqref{eq-H1} by the coefficient of $(\overline{\a-z})$ in \eqref{eq-H2},  \eqref{eq-H2} by the coefficient of $(\overline{\a-z})$ in \eqref{eq-H1}, and subtracting, we get an expression of the form
\[
0\approx A(z) + B(z) (\alpha-z)\,,
\]
where
\[
A(z)= \left(\overline{h'(z)}-\frac{\overline{h''(z)}}{2}\frac{\overline{N(z)}}{J(z)}\right)f(z)-\left(\overline{g'(z)}-\frac{\overline{g''(z)}}{2}\frac{\overline{N(z)}}{J(z)}\right)\overline{f(z)}
\]
and

\[
B(z)= \left| h'(z)-\frac{h''(z)}{2}\frac{N(z)}{J(z)}\right|^2- \left| g'(z)-\frac{g''(z)}{2}\frac{N(z)}{J(z)}\right|^2\,.
\]

This gives the expression for the generating function of the \emph{harmonic Halley method}:
\begin{equation}\label{eq-fcn}
\alpha\approx z-\frac{A(z)}{B(z)}=F^{H_H}(z)\,.
\end{equation}

We now prove the main result in this paper which shows that $F_{zzz}^{H_H}(\alpha)=-S_H(f)(\alpha)$, where $S_H(f)$ is the harmonic Schwarzian derivative of $f$.

\begin{thm}
For a given locally univalent harmonic mapping $f=h+\overline g$ with $f(\alpha)=0$,  and the function $F_{zzz}^{H_H}$ defined by \eqref{eq-fcn}, the following identities hold: 
\[
F^{H_H}(\a)=\a\,,\quad F^{H_H}_z(\a)=F^{H_H}_{zz}(\a)=0\,,\quad \text{and}\quad F^{H_H}_{zzz}(\a)=-S_H(f)(\alpha)\,,
\]
where $S_H(f)$ is the harmonic Schwarzian derivative of $f$ given by \eqref{eq-Schwarzian}.
\end{thm}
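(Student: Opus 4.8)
The plan is to compute the partial derivatives $F^{H_H}_z$, $F^{H_H}_{zz}$, $F^{H_H}_{zzz}$ at $\alpha$ directly from the quotient formula \eqref{eq-fcn}, using repeatedly the fact that $f(\alpha)=0$ (hence $\overline{f(\alpha)}=0$ and $N(\alpha)=0$) to kill most terms. The value $F^{H_H}(\alpha)=\alpha$ is immediate since $A(\alpha)=0$. For the first two derivatives, the key observation is that $A(z)$ vanishes to order two at $\alpha$ in the following sense: $A$ is a combination of $f(z)$ and $\overline{f(z)}$ with smooth coefficients, and moreover the coefficient of $f(z)$ is $\overline{h'(z)}-\tfrac{\overline{h''(z)}}{2}\tfrac{\overline{N(z)}}{J(z)}$ while the coefficient of $\overline{f(z)}$ is the analogous expression with $g$; since $N(z)$ itself is $O(|z-\alpha|)$, one finds that in $z$-Taylor expansion around $\alpha$ the function $A$ has no constant term and no linear term (the linear term in $z$ from $f(z)$ is matched against the linear correction coming through $N$). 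More precisely, I would write $F^{H_H}_z = -(A_z B - A B_z)/B^2$, note $A(\alpha)=0$ so $F^{H_H}_z(\alpha) = -A_z(\alpha)/B(\alpha)$, and then show $A_z(\alpha)=0$: differentiating $A$ once in $z$, every surviving term either still carries a factor $f(\alpha)$, $\overline{f(\alpha)}$, or $N(\alpha)$, all of which are zero (here one uses $\partial_z \overline{f(z)} = \overline{g'(z)}$, $\partial_z \overline{N(z)} = 0$ up to terms in $\overline f$, and $\partial_z N(z)$ evaluated at $\alpha$ reduces to $\overline{h'}h' - \overline{g'}g' = J(\alpha)$ against the corrected coefficients, which cancel). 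That same bookkeeping, carried one order further, gives $A_{zz}(\alpha)=0$ and hence $F^{H_H}_{zz}(\alpha)=0$.

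For the main identity $F^{H_H}_{zzz}(\alpha) = -S_H(f)(\alpha)$, I would exploit that $A(\alpha)=A_z(\alpha)=A_{zz}(\alpha)=0$ forces the third derivative of the quotient to collapse to a clean expression: with $B(\alpha)=J(\alpha)\neq 0$, a general Leibniz/quotient computation shows
\[
F^{H_H}_{zzz}(\alpha) = -\frac{A_{zzz}(\alpha)}{B(\alpha)} + \frac{3\,A_{zz}(\alpha)\,B_z(\alpha)}{B(\alpha)^2} + (\text{terms with } A_z(\alpha),A(\alpha)) = -\frac{A_{zzz}(\alpha)}{J(\alpha)}\,,
\]
since the last two groups vanish. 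So the whole problem reduces to evaluating $A_{zzz}(\alpha)$ and checking that it equals $J(\alpha)\,S_H(f)(\alpha)$. To do this I would substitute $g' = \omega h'$, $g'' = \omega' h' + \omega h''$, $g''' = \omega'' h' + 2\omega' h'' + \omega h'''$ and also expand $N(z) = \overline{h'}f - \overline{g'}\,\overline f$ and its $z$-derivatives near $\alpha$ (noting $N(\alpha)=0$, $N_z(\alpha) = \overline{h'}h' - \overline{g'}g' = J(\alpha)$ after using $f_z = h'$, $\partial_z\overline f = \overline{g'}$, and that $\overline f(\alpha)=0$; similarly $N_{zz}(\alpha)$ picks up $\overline{h'}h'' - \overline{g'}g''$, etc.). Pushing these expansions into $A$ and collecting the coefficient of $(z-\alpha)^3$ — equivalently computing $\tfrac{1}{6}A_{zzz}(\alpha)$ — should, after dividing by $J(\alpha)$, reproduce term by term the three pieces of \eqref{eq-Schwarzian}: the $S(h)$ piece from the pure-$h$ terms, the $\tfrac{\overline\omega}{1-|\omega|^2}\big(\tfrac{h''}{h'}\omega' - \omega''\big)$ piece from the mixed terms involving one $\omega$-derivative, and the $-\tfrac32\big(\tfrac{\omega'\overline\omega}{1-|\omega|^2}\big)^2$ piece from the terms quadratic in $\omega'\overline\omega$. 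A convenient sanity check along the way: setting $\omega\equiv 0$ must recover the classical identity $(F^H)'''(\alpha) = -S(\varphi)(\alpha)$ from the introduction, since then $A$ reduces to $\overline{h'}f$ and the computation is the analytic one in disguise.

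The main obstacle I anticipate is purely the organization of the order-three expansion of $A$ around $\alpha$: $A$ involves $f$, $\overline f$, $N$, $\overline N$, $J$ and their derivatives, and one must track which contributions survive evaluation at $\alpha$ and then simplify the $1-|\omega|^2$ denominators (coming from $J(\alpha) = |h'(\alpha)|^2(1-|\omega(\alpha)|^2)$ and from $N_z(\alpha)/J(\alpha)$). The cleanest route is probably to introduce the shorthand $t = z-\alpha$, expand $f(z) = h'(\alpha)t + \tfrac12 h''(\alpha)t^2 + \tfrac16 h'''(\alpha)t^3 + \overline{g'(\alpha)}\,\bar t + \tfrac12\overline{g''(\alpha)}\,\bar t^2 + \cdots$ (and correspondingly $\overline{f}$, $N$, $\overline N$, $J$ as polynomials in $t,\bar t$ through the needed order), substitute everything into $A(z)/B(z)$, and read off the coefficient of $t^3$; since we only differentiate in $z$ (not $\bar z$), only the pure-$t$ part of the expansion of $F^{H_H}$ is needed, which keeps the bookkeeping finite and manageable. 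I expect no conceptual difficulty beyond this, and the identity \eqref{eq-Schwarzian} is exactly the target the algebra must land on.
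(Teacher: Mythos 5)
There is a genuine error in your bookkeeping, and it propagates into the main reduction. Your claims $A_z(\alpha)=0$ and $A_{zz}(\alpha)=0$ are false. Since $\partial_z f=h'$, $\partial_z\overline f=g'$, and $f(\alpha)=\overline{f(\alpha)}=N(\alpha)=\overline{N(\alpha)}=0$, differentiating $A$ once leaves the term $\overline{h'(\alpha)}h'(\alpha)-\overline{g'(\alpha)}g'(\alpha)=J(\alpha)\neq 0$, so in fact $A_z(\alpha)=J(\alpha)$, and similarly $A_{zz}(\alpha)=J_z(\alpha)$ (these are exactly the values the paper computes). Your sanity check already refutes you: with $\omega\equiv 0$ one has $A=\bigl(\overline{h'}-\tfrac{\overline{h''}}{2}\tfrac{\overline N}{J}\bigr)f$ and $A_z(\alpha)=|h'(\alpha)|^2\neq 0$; in the classical Halley case $(F^H)'(\alpha)=0$ comes from the cancellation of the leading $1$ (from differentiating the explicit $z$ in $F=z-A/B$) against $A_z(\alpha)/B(\alpha)=1$, not from the numerator vanishing to second order. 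Your formula $F_z=-(A_zB-AB_z)/B^2$ drops that $1$, and the two mistakes happen to compensate at first order, which is why you still land on $F_z(\alpha)=0$; but this is not a valid derivation, and at second order the correct mechanism is again a cancellation, namely $A_{zz}(\alpha)/B(\alpha)=2B_z(\alpha)A_z(\alpha)/B^2(\alpha)$ (using $B_z(\alpha)=J_z(\alpha)/2$), not vanishing of $A_{zz}(\alpha)$.

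Consequently the collapse you rely on for the main identity is wrong: with the correct values one gets
\[
F^{H_H}_{zzz}(\alpha)=\frac{3B_{zz}(\alpha)-A_{zzz}(\alpha)}{J(\alpha)}\,,
\]
not $-A_{zzz}(\alpha)/J(\alpha)$, so your target ``$A_{zzz}(\alpha)=J(\alpha)\,S_H(f)(\alpha)$'' is not the right identity to verify. The extra term $3B_{zz}(\alpha)$ is essential: $A_{zzz}(\alpha)$ contains the contribution $-\tfrac32\,\overline{J_z(\alpha)}\,\bigl(\tfrac{\overline N}{J}\bigr)_{zz}(\alpha)$, which is cancelled only by the matching term inside $3B_{zz}(\alpha)$, while the surviving piece $-\tfrac32 J_z(\alpha)\bigl(\tfrac{N}{J}\bigr)_{zz}(\alpha)=\tfrac32 J_z^2(\alpha)/J(\alpha)$ from $B_{zz}$ is precisely what produces the square term of $P_H(f)(\alpha)$ in $S_H(f)(\alpha)$. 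So if you pursued your reduction the algebra would not land on $-S_H(f)(\alpha)$. Your closing suggestion (full Taylor expansion of $z-A/B$ in $t=z-\alpha$ and $\bar t$, reading off the pure $t^3$ coefficient) would work if carried out, but it is exactly the computation whose outcome your intermediate claims misstate; the needed ingredients are the nonzero values $A_z(\alpha)=B(\alpha)=J(\alpha)$, $A_{zz}(\alpha)=2B_z(\alpha)=J_z(\alpha)$, together with $\bigl(\tfrac{N}{J}\bigr)_{zz}(\alpha)=-J_z(\alpha)/J(\alpha)$, and then the substitution $g'=\omega h'$ as in the paper.
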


\begin{proof}
In order not to burden the notation, let us agree to use $F$ to denote $F^{H_H}$.  

Let us start by obtaining the following identities related to the function $N=\overline{h'} f-\overline{g'f}$ involved in the definition of the mappings $A$ and $B$ in the formula \eqref{eq-fcn} for $F$, which will be helpful. To do so, we use  \eqref{eq-derivadas}.

\par\smallskip
Bearing in mind that $f(\a)=0$, we have $N(\a)=0$. Hence, $B(\a)=J(\a)$ and, since $A(\a)=0$, we have $F(\a)=\a$.

\par\smallskip

A straightforward calculation shows 
\[
N_z= J\quad \text{and}\quad  (\overline N)_z = h''\overline{f}-g''f\,, 
\]
so that (using again the condition that $f(\a)=0$), we have

\begin{equation} \label{eq-N1}
N_z(\a)=J(\a)\quad \text{and}\quad (\overline N)_z(\a)=0\,.
\end{equation} 

Now, it is easy to check that 

\[
\left(\frac{N}{J}\right)_z=\frac{N_z J-NJ_z}{J^2}=1-\frac{NJ_z}{J^2}\quad \text{and}\quad \left(\frac{\overline N}{J}\right)_z=\frac{(\overline N)_z J-\overline NJ_z}{J^2}\,.
\]
Therefore, by \eqref{eq-N1} and the condition $N(\a)=0$, we obtain
\begin{equation} \label{eq-N2}
\left(\frac{N}{J}\right)_z(\a)=1\quad \text{and}\quad \left(\frac{\overline N}{J}\right)_z(\a)=0\,.
\end{equation}

Since
\[
\left(\frac{N}{J}\right)_{zz}=-\frac{[N_zJ_z+NJ_{zz}]J-2NJ^2_z}{J^3}\,,
\]
we have, by \eqref{eq-N1},
\begin{equation} \label{eq-N3}
\left(\frac{N}{J}\right)_{zz}(\alpha)=-\frac{N_z(\alpha)J_z(\alpha)}{J^2(\alpha)}=-\frac{J_z(\alpha)}{J(\alpha)}\,.\end{equation}

The first two derivatives of the function $A$ are given, respectively, by
\[
A_z=-\frac{\overline{h''}}{2}\left(\frac{\overline N}{J}\right)_z f+ \left(\overline{h'}-\frac{\overline{h''}}{2}\frac{\overline N}{J} \right)h'+\frac{\overline{g''}}{2}\left(\frac{\overline N}{J}\right)_z \overline f- \left(\overline{g'}-\frac{\overline{g''}}{2}\frac{\overline N}{J} \right)g'
\]
and
\begin{eqnarray*}
A_{zz}&=& \left[-\frac{\overline{h''}}{2} \left(\frac{\overline N}{J}\right)_{zz}\right]f-\left[\overline{h''}\left(\frac{\overline N}{J}\right)_{z}\right]h'+   \left[\overline{h'}-\frac{\overline{h''}}{2}\frac{\overline N}{J} \right]h''\\
&+&
\left[\frac{\overline{g''}}{2} \left(\frac{\overline N}{J}\right)_{zz}\right]\overline{f}+\left[\overline{g''}\left(\frac{\overline N}{J}\right)_{z}\right]g'-   \left[\overline{g'}-\frac{\overline{g''}}{2}\frac{\overline N}{J} \right]g''\,.
\end{eqnarray*}

Therefore, using $f(\a)=N(\a)=0$, \eqref{eq-N1}, and \eqref{eq-N2}, we obtain
\begin{equation}\label{eq-A1}
A_z(\a)=J(\a)\quad\text{and}\quad A_{zz}(\a)=J_z(\a)\,.
\end{equation}

Moreover, these conditions can be used to get the value of $A_{zzz}(\alpha)$, which equals
\begin{eqnarray}\label{eq-A2}
\nonumber A_{zzz}(\alpha)&=&-\frac{3}{2}\left(\overline{h''(\a)}h'(\a)-\overline{g''(\a)}g'(\a)\right)\, \left(\frac{\overline N}{J}\right)_{zz}(\a)\\
\nonumber &+& \left(\overline{h'(\a)}h'''(\a)-\overline{g'(\a)}g'''(\a)\right)\\
&=& -\frac{3}{2} \overline{J_z(\a)} \left(\frac{\overline N}{J}\right)_{zz}(\a)+ \left(\overline{h'(\a)}h'''(\a)-\overline{g'(\a)}g'''(\a)\right)\,.
\end{eqnarray}

Regarding the function $B$, we have

\begin{eqnarray*}
B_{z}&=& \left[h''-\frac{h'''}{2} \left(\frac{N}{J}\right)-\frac{h''}{2} \left(\frac{N}{J}\right)_z\right] \left[\overline{h'}-\frac{\overline{h''}}{2} \left( \frac{\overline N}{J}\right) \right]\\
&+&  \left[h'-\frac{h''}{2} \left(\frac{N}{J}\right)\right] \left[-\frac{\overline{h''}}{2} \left( \frac{\overline N}{J}\right)_z \right]\\
&-&
 \left[g''-\frac{g'''}{2} \left(\frac{N}{J}\right)-\frac{g''}{2} \left(\frac{N}{J}\right)_z\right] \left[\overline{g'}-\frac{\overline{g''}}{2} \left( \frac{\overline N}{J}\right) \right]\\
 &-&  \left[g'-\frac{g''}{2} \left(\frac{N}{J}\right)\right] \left[-\frac{\overline{g''}}{2} \left( \frac{\overline N}{J}\right)_z \right]\,.
\end{eqnarray*}

Hence, using again $f(\a)=N(\a)=0$, \eqref{eq-N1}, and \eqref{eq-N2}, we get
\begin{equation}\label{eq-B1}
B_z(\a)=\frac{h''(\alpha) \overline{h'(\alpha)}-g''(\alpha) \overline{g'(\alpha)}}{2} = \frac{J_z(\alpha)}{2}
\end{equation}
and
\begin{align}\label{eq-B2}
\nonumber& B_{zz}(\alpha)= -\frac{\overline{h'(\a)}h''(\a)}{2} \left(\frac{N}{J}\right)_{zz}(\a)-  \frac{ h'(\a)\overline{h''(\a)}}{2} \left(\frac{\overline N}{J}\right)_{zz}(\a)\\
\nonumber&+ \frac{\overline{g'(\a)}g''(\a)}{2} \left(\frac{N}{J}\right)_{zz}(\a)+  \frac{ g'(\a)\overline{g''(\a)}}{2} \left(\frac{\overline N}{J}\right)_{zz}(\a)\\
\quad & = -\frac{J_z(\a)}{2} \left(\frac{N}{J}\right)_{zz}(\a) -\frac{\overline{J_z(\a)}}{2} \left(\frac{\overline N}{J}\right)_{zz}(\a)\,.
\end{align}

With this information at hand, now it is easy to prove that $F_z(\alpha)=F_{zz}(\alpha)=0$. More specifically, since
\[
F_z=1-\frac{A_zB-AB_z}{B^2}\,,
\]
we have, using $A(\a)=0$, $B(\a)=J(\a)$, and \eqref{eq-A1}, the desired first identity $F_z(\a)=0$. The second derivative with respect to $z$ of $F$ equals
\[
F_{zz}=\frac{2B_z(A_zB-AB_z)-B(A_{zz}B-AB_{zz})}{B^3}\,,
\]
which gives, using also \eqref{eq-B1}, $F_{zz}(\a)=0$. 

\par\smallskip
The fact that $A(\a)=0$ gives 
\begin{eqnarray*}
F_{zzz}(\a)&=&\frac{2A_z(\a)B^2(\a)B_{zz}(\a)-A_{zzz}(\a)B^3(\a)+A_z(\a)B^2(\a)B_{zz}(\a)}{B^4(\a)}\\
&+&\frac{-6A_z(\a)B(\a)B^2_z(\a)+3A_{zz}(\a)B^2(\a)B_z(\a)}{B^4(\a)}\\
&=&\frac{3A_z(\a)B(\a)B_{zz}(\a)-A_{zzz}(\a)B^2(\a)}{B^3(\a)}\\
&+&\frac{3A_{zz}(\a)B(\a)B_z(\a)-6A_z(\a) B^2_z(\a)}{B^3(\a)}\,.\\
\end{eqnarray*}

Using that $B(\a)=J(\a)$, \eqref{eq-A1}, and \eqref{eq-B1}, we get

\begin{eqnarray*}
F_{zzz}(\a)&=&\frac{3J^2(\a)B_{zz}(\a)-A_{zzz}(\a)J^2(\a)}{J^3(\a)}\\
&+&\frac{-\frac{3}{2}J(\a) J_z^2(\a)+\frac{3}{2} J(\a) J^2_z(\a)}{J^3(\a)}\\
&=& \frac{3B_{zz}(\a)-A_{zzz}(\a)}{J(\a)}\,.
\end{eqnarray*}

Finally, by \eqref{eq-A2} and \eqref{eq-B2}, we obtain (using also \eqref{eq-N3})

\begin{align}\label{eq-F}
\nonumber & F_{zzz}(\a)=\frac{3}{J(\alpha)} \left(-\frac{J_z(\a)}{2} \left(\frac{N}{J}\right)_{zz}(\a) -\frac{\overline{J_z(\a)}}{2} \left(\frac{\overline N}{J}\right)_{zz}(\a)\right)\\
 \nonumber &- \frac{1}{J(\a)} \left( -\frac{3}{2} \overline{J_z(\a)} \left(\frac{\overline N}{J}\right)_{zz}(\a)+ \left(\overline{h'(\a)}h'''(\a)-\overline{g'(\a)}g'''(\a)\right)\right)\\
 \nonumber & = -\frac{3}{2} \frac{J_z(\a)}{J(\alpha)} \left(\frac{N}{J}\right)_{zz}(\a)-\frac{\overline{h'(\a)}h'''(\a)-\overline{g'(\a)}g'''(\a)}{J(\alpha)}\\
 & =  \frac{3}{2} \frac{J^2_z(\a)}{J^2(\alpha)} -\frac{\overline{h'(\a)}h'''(\a)-\overline{g'(\a)}g'''(\a)}{J(\alpha)}\,.
\end{align}

To finish the proof of the theorem, we are to show that the expression in \eqref{eq-F} equals $-S_H(f)(\a)$, where $S_H(f)$ is the Schwarzian derivative of $f$ given by \eqref{eq-Schwarzian}. To do so, recall that $g'=\omega h'$, where $\omega$ is de dilatation of the harmonic function $f=h+\overline g$. Therefore,
\begin{equation}\label{eq-dilat}
g''=\omega' h'+\omega h'' \quad \text{and}\quad g'''=\omega'' h' +2\omega'h''+\omega h'''\,.
\end{equation}

The first term in \eqref{eq-F}, then, equals
\begin{eqnarray}\label{eq-1}
\nonumber \frac{3}{2}\left(\frac{J_z(\a)}{J(\a)}\right)^2&=&\frac{3}{2} \left(\frac{\overline{h'(\a)}h''(\a)-\overline{g'(\a)}g''(\a)}{\overline{h'(\a)}h'(\a)-\overline{g'(\a)}g'(\a)}\right)^2\\
\nonumber &=& \frac{3}{2} \left(\frac{h''(\a)-\overline{\omega(\a)}g''(\a)}{h'(\a)-\overline{\omega(\a)}g'(\a)}\right)^2\\
\nonumber &=&  \frac{3}{2} \left(\frac{h''(\a)(1-|\omega(\a)|^2)-\overline{\omega(\a)}\omega'(\a)h'(\a)}{h'(\a)(1-|\omega(\a)|^2)}\right)^2\\
&=&\frac{3}{2} \left(\frac{h''(\a)}{h'(\a)}-\frac{\overline{\omega(\a)}\omega'(\a)}{1-|\omega(\a)|^2}\right)^2\,.
\end{eqnarray}

On the other hand, the second term in \eqref{eq-F} can be written, using \eqref{eq-dilat} as

\begin{align}\label{eq-2}
& \nonumber \frac{\overline{h'(\a)}h'''(\a)-\overline{g'(\a)}g'''(\a)}{J(\alpha)}= \frac{\overline{h'(\a)}h'''(\a)-\overline{g'(\a)}g'''(\a)}{\overline{h'(\a)}h'(\a)-\overline{g'(\a)}g'(\a)}\\
& \nonumber =\frac{h'''(\a)-\overline{\omega(\a)}g'''(\a)}{h'(\a)-\overline{\omega(\a)}g'(\a)}\\ 
&\nonumber = \frac{h'''(\a)(1-|\omega(\a)|^2)-2\overline{\omega(\a)}\omega'(\a)h''(\a)-\overline{\omega(\a)}\omega''(\a)h'(\a)}{h'(\a)(1-|\omega(\a)|^2)} 
\\ & =  \frac{h'''(\a)}{h'(\a)} -2 \frac{\overline{\omega(\a)}}{1-|\omega(\a)|^2} \frac{h''(\a)}{h'(\a)} \omega'(\a) -\frac{\overline{\omega(\a)}}{1-|\omega(\a)|^2} \omega''(\a)\,.
\end{align}

By plugging \eqref{eq-1} and \eqref{eq-2} into \eqref{eq-F}, we end the proof of the theorem.
\end{proof}

\section{Final remarks}

As pointed out before, in this paper we have not considered the analysis of the conditions of convergence of the harmonic numerical methods presented. For such analysis, we refer the reader to the paper \cite{SZ} for the harmonic Newton method. The analysis of the convergence conditions (if any)  in relation to the iteration given by the generating function \eqref{eq-fcn} may perhaps be an object of study in a future work. 

\par\smallskip
Of particular interest might also be to analyze the possible analogues of the geometric properties of the classical Halley method, already studied by different authors (see, for instance, \cite{ST} and the references therein. The articles \cite{S} and \cite{YB} must be mentioned as well) to those cases when the functions considered are harmonic.

\section{Acknowledgements}

The author would like to thank the referee for his/her careful reading of the paper.

\end{document}